\DeclareMathOperator{\sinc}{sinc}
\newtheorem{mylem}{Lemma}[section]
\newtheorem{mythe}{Theorem}[section]
\newtheorem{myrem}{Remark}[section]
\newtheorem{cor}{Corollary}[section]
\newtheorem{prop}{Proposition}[section]
\newcommand{\ep}{\varepsilon}
\title{Remarkable properties of the $\sinc_{p, q}$ functions and related integrals}
\author[1]{Houry Melkonian}
\author[2]{Shingo Takeuchi}
\affil[1]{{\small \it Department of Mathematics, College of Engineering, Mathematics and Physical Sciences, University of Exeter, Penryn TR10 9FE, UK.}}
\affil[2]{{\small \it Department of Mathematical Sciences, College of Systems Engineering and Science, Shibaura Institute of Technology, Saitama 337-8570, Japan.}}
\begin{document}
\maketitle
\footnotetext[1]{Email address: \texttt{h.melkonian@exeter.ac.uk}}
\footnotetext[2]{Email address: \texttt{shingo@shibaura-it.ac.jp}}

\begin{abstract}
Different types of sinc integrals are investigated when the standard sine function is replaced by the generalised $\sin_{p,q}$ in two parameters.
A striking generalisation of the improper Dirichlet integral is achieved. A second surprising but interesting generalisation of the identity between the Dirichlet integral and that of the integrand $\sinc^2$ is discovered. Moreover, an asymptotically sharpened form of Ball's integral inequality is obtained in terms of two parameters.
\end{abstract}
\textbf{Mathematics Subject Classification.} Primary 33B10, 33F99; Secondary 42A99.\\
\textbf{Keywords.} Ball's integral inequality, generalised trigonometric functions, Dirichlet integral, $p$-Laplace operator, asymptotic expansion.\\

The main objective of this paper is to establish an understanding of the two integrals
\begin{align}\label{2int}
\int_0^\infty \left|\frac{\sin_{p,q} x}{x}\right|^{r-1}  \frac{\sin_{p,q} x}{x}dx \qquad \text{and} \qquad  \int_0^\infty \left\vert\frac{\sin_{p,q} x}{x}\right\vert^m dx
\end{align}
when $r>0 $ and $m, p, q \in (1, \infty)$. The functions $\sin_{p,q}$ in the 
integrands
are deformations of the classical trigonometric sine when controlled by the values of the parameters $p, q \in (1, \infty)$ and are known as the generalised trigonometric sine functions. They appear as eigenfunctions of 
the nonhomogeneous eigenvalue problem with the $p$-Laplacian:
\begin{equation}
\label{eq:ep}
-(|u'|^{p-2}u')'=\lambda |u|^{q-2}u
\end{equation}
under a Dirichlet boundary condition 
and are defined as the inverse of the function $F_{p,q}: \left[0,1\right]\rightarrow \left[0, \frac{\pi_{p,q}}{2}\right]$ which is given by the Abelian integral:
\[
\sin_{p,q}^{-1} x:=F_{p,q}(x)=\int_0^x (1-t^q)^{-1/p} dt, \qquad x \in [0,1],
\]
where $\pi_{p,q}/2:=F_{p,q}(1)=B(1/p^{*},1/q)/q$ ($B$ is the beta function and $p^*:=p/(p-1)$). The generalised trigonometric sine is an increasing function on the interval $ \left[0, \frac{\pi_{p,q}}{2}\right]$ and is extended to the whole real line as a $2\pi_{p,q}$-periodic function by means of $\sin_{p,q}x=\sin_{p,q}(\pi_{p,q}-x)$ and $\sin_{p,q} (-x)=-\sin_{p,q} x$. Various properties of these functions and their counterparts $\cos_{p,q}x:=(\sin_{p,q} x)'$ and $\tan_{p,q}x:=(\sin_{p,q} x)/(\cos_{p,q} x)$ are explored. See \cite{ baksi2018basis,  BBCDG2006,  BL2010, BM2016, EdmundsGurkaLang2012, EdmundsLang2011, lindqvist1993note, 1984Otani, takeuchi2016multiple}.

The research we present in our paper is new of its kind, it started when D. E. Edmunds proposed the question about whether it is possible to form 
generalisations of the well-known integral inequality by K. Ball \cite{ball1986cube}:
\begin{equation}
\label{eq:ball}
\sqrt{m} \int_{-\infty}^\infty \left\vert\frac{\sin x}{x}\right\vert^m dx \leq \sqrt{2}\pi \qquad \text{for} \quad m \geq2,
\end{equation}
and of its asymptotically sharpened form (as in \cite{borwein2010p} and \cite{kerman2015asymptotically})
\begin{align}\label{BII}
\sqrt{m} \int_0^\infty \left\vert\frac{\sin x}{x}\right\vert^m dx \sim \sqrt{\frac{3\pi}{2}}\left(1-\frac{3}{20}  \frac{1}{m}-\frac{13}{1120}\frac{1}{m^2}\right)+\sum_{j=3}^\infty \frac{c_j}{m^j} \quad \text{as} \quad m \to \infty,
\end{align}
in the case when the standard trigonometric sine function is replaced by its generalisation $\sin_p:=\sin_{p,p}$ for $p>1$. The latter equivalence was explored in the works of D. Borwein, J. M. Borwein, I. E. Leonard, R. Kerman, R. Ol'hava, S. Spektor and many others \cite{ball1986cube, borwein2001some, borwein2010p, kerman2015asymptotically} about integrals of standard sinc function. 

Inspired by these, a first attempt to answer Edmund's question was investigated in the paper \cite{edmunds2019behaviour}, and results about the asymptotic nature of the integral on the left-hand side of Ball's inequality were obtained. 

In the current paper we aim to provide further developments to the existing research. Our first result is Theorem \ref{dirintpq}, which justifies an identity involving integrals of integer powers of the $(\sin_{p,q} x)/x$. A second important outcome of the paper is expressed in Theorem \ref{sinqq'}, where an integral identity between two kinds of generalised trigonometric functions, $\sin_{2,q}$ and $\sin_{q^*,q}$, is achieved. Both theorems provide fruitful generalisations of the classical identities:
\begin{align}\label{dirint22}
\int_0^\infty \left(\frac{\sin x}{x}\right)^2 dx=\int_0^\infty \frac{\sin x}{x} dx=\frac{\pi}{2},
\end{align}
which are contained in the theorems as the case $p=q=2$.

In addition, Theorem \ref{asyexp} and Corollary \ref{pqBall} communicate results about the second integral in \eqref{2int} providing a generalisation of the asymptotically sharpened form \eqref{BII} of the left-hand side of Ball's integral inequality \eqref{eq:ball}.    \\                   

For $p, q\in (1,\infty ),$ define the function sinc$_{p, q}$ to be $\sinc_{p, q}x:=(\sin_{p,q}x)/x$ when $ x\in \mathbb{R}\backslash \{0\}$ and $\sinc_{p,q} 0:=1$.
It is obvious that $\sinc_{p,q}$ is an even function and has roots at $n\pi _{p, q}$ when $n\in \mathbb{Z}%
\backslash \{0\}.$ Moreover, since $\left\vert \sin _{p,q}x\right\vert \leq 1$ for all $%
x\in \mathbb{R},$ $\lim_{\left\vert x\right\vert \rightarrow \infty }$sinc$%
_{p, q}x=0.$

\begin{mylem}\label{sincpmonotonic}
\begin{enumerate}[(i)]
\item \label{1a} For all $x \in \mathbb{R}$, $\vert \sinc_{p, q} x \vert\leq1$.
\item \label{1b} The function $\sinc_{p, q} $ is strictly decreasing on the interval $\left(0, \frac{\pi_{p, q}}{2}\right]$.
\end{enumerate}
\end{mylem}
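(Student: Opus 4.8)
For part (i) the plan is to reduce everything to the fundamental cell $[0,\pi_{p,q}/2]$ and to exploit the integral representation of $F_{p,q}$. Since $(1-t^q)^{-1/p}\ge 1$ for $t\in[0,1]$, integrating immediately gives $F_{p,q}(x)\ge x$ for all $x\in[0,1]$; in particular $\pi_{p,q}/2=F_{p,q}(1)\ge 1$. Writing $y=\sin_{p,q}x$ so that $x=F_{p,q}(y)$, the inequality $F_{p,q}(y)\ge y$ translates into $\sin_{p,q}x\le x$ on $[0,\pi_{p,q}/2]$. Because $\sinc_{p,q}$ is even it suffices to treat $x\ge 0$: for $0\le x\le 1$ the bound $\sin_{p,q}x\le x$ yields $0\le \sinc_{p,q}x\le 1$, while for $x\ge 1$ the elementary estimate $|\sin_{p,q}x|\le 1$ already gives $|\sinc_{p,q}x|\le 1/x\le 1$. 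As $\pi_{p,q}/2\ge 1$, these two ranges cover every $x\ge 0$, which proves (i).

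For part (ii) I would analyse the sign of the derivative of $g(x):=\sinc_{p,q}x=(\sin_{p,q}x)/x$. A direct computation using $(\sin_{p,q}x)'=\cos_{p,q}x$ gives
\[
g'(x)=\frac{x\cos_{p,q}x-\sin_{p,q}x}{x^2},
\]
so strict monotonic decrease is equivalent to the positivity of $h(x):=\sin_{p,q}x-x\cos_{p,q}x$ on $(0,\pi_{p,q}/2]$. To control $h$ I would pass from the integral definition of $F_{p,q}$ to the pointwise differential identities for $\sin_{p,q}$. Differentiating $F_{p,q}(\sin_{p,q}x)=x$ and using $F_{p,q}'(s)=(1-s^q)^{-1/p}$ yields, on $(0,\pi_{p,q}/2)$, the first-order relation $\cos_{p,q}x=(1-\sin_{p,q}^q x)^{1/p}$, that is, the generalised Pythagorean identity $\cos_{p,q}^p x+\sin_{p,q}^q x=1$.

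Differentiating this identity gives $(\cos_{p,q}x)'=-\tfrac{q}{p}\,\sin_{p,q}^{q-1}x\,\cos_{p,q}^{2-p}x$, which is strictly negative on $(0,\pi_{p,q}/2)$ since there $\sin_{p,q}x>0$ and $\cos_{p,q}x>0$. Consequently $h'(x)=-x(\cos_{p,q}x)'>0$, and as $h(0)=0$ I conclude $h(x)>0$ throughout $(0,\pi_{p,q}/2]$; hence $g'<0$ and $\sinc_{p,q}$ is strictly decreasing. Equivalently, one may observe that $\sin_{p,q}$ is strictly concave on $[0,\pi_{p,q}/2]$ with $\sin_{p,q}0=0$, and the chord slope $(\sin_{p,q}x)/x$ of a strictly concave function vanishing at the origin is strictly decreasing.

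The main technical point is the passage from the integral definition of $F_{p,q}$ to the pointwise differential identities for $\cos_{p,q}$ and $(\cos_{p,q})'$, which rests on the inverse function theorem and underpins the entire sign analysis. A minor subtlety is the endpoint $x=\pi_{p,q}/2$, where $\cos_{p,q}x=0$ and the closed form for $(\cos_{p,q}x)'$ degenerates when $p>2$; this is harmless, since the monotonicity is deduced from $g'<0$ on the open interval together with continuity of $g$, and at the endpoint one has $h(\pi_{p,q}/2)=1>0$ directly.
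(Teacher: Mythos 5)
Your argument is correct. Note that the paper itself does not prove this lemma but defers to the one-parameter case \cite[Lemma 2.1]{edmunds2019behaviour}; your proof is the natural two-parameter version of that standard argument, namely $\sin_{p,q}x\le x$ via $(1-t^q)^{-1/p}\ge1$ together with $\pi_{p,q}/2\ge1$ for (i), and the sign of $\sin_{p,q}x-x\cos_{p,q}x$ via the generalised Pythagorean identity (equivalently, strict concavity of $\sin_{p,q}$ on $[0,\pi_{p,q}/2]$) for (ii). Your handling of the endpoint $x=\pi_{p,q}/2$, where $(\cos_{p,q})'$ degenerates for $p>2$, is the right way to close the only genuine subtlety.
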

\begin{proof}
The proof is analogous to that of \cite[Lemma 2.1]{edmunds2019behaviour}, hence omitted.
\end{proof}
\section{Improper Riemann integrals of $\sinc_{p,q}$ functions}
It is known that the improper integrals \eqref{dirint22} of the $\sinc$ function over the positive interval exist in the sense of Riemann integrals.
We start this section by a precise consideration of the convergence properties of the integrals in \eqref{2int}.
\begin{prop}
Let $p,\ q \in (1,\infty)$. Then,
\begin{enumerate}[(i)]
\item $\displaystyle \int_0^\infty \left|\frac{\sin_{p,q}{x}}{x}\right|^{r-1}\frac{\sin_{p,q}{x}}{x}\,dx$ 
is convergent for $r>0$.
\item $\displaystyle \int_0^\infty \left| \frac{\sin_{p,q}{x}}{x} \right|^m\,dx$ 
is convergent for $m>1$ and divergent for $m=1$.
\end{enumerate}
\end{prop}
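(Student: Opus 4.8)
The plan is to treat both integrals by splitting the range of integration into a neighbourhood of the origin and the tail $\int_1^\infty$, since the two regions require entirely different mechanisms. Near $x=0$ both integrands are harmless: since $\sinc_{p,q}x\to 1$ as $x\to0$, the integrand in (i), which equals $|\sinc_{p,q}x|^{r-1}\sinc_{p,q}x$, extends continuously with value $1$ at the origin (more generally, writing it as $\operatorname{sgn}(\sin_{p,q}x)\,|\sin_{p,q}x|^{r}/x^{r}$ shows that for every $r>0$ it is continuous and bounded, the apparent singularities at the zeros of $\sin_{p,q}$ being cancelled by the factor $|\sin_{p,q}x|^{r}$). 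The integrand in (ii) is likewise bounded. Hence both integrals over $[0,1]$ exist as proper Riemann integrals for all $r>0$ and $m\ge 1$, and all the substance lies in the tail.

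For (ii) with $m>1$ I would simply compare: since $|\sin_{p,q}x|\le 1$, the integrand is dominated by $x^{-m}$, and $\int_1^\infty x^{-m}\,dx<\infty$, giving absolute convergence. For the divergence at $m=1$ I would bound the tail below block by block. With $c:=\int_0^{\pi_{p,q}}|\sin_{p,q}t|\,dt>0$, periodicity gives on each interval $[n\pi_{p,q},(n+1)\pi_{p,q}]$ the estimate $\int |\sin_{p,q}x|/x\,dx\ge c/\bigl((n+1)\pi_{p,q}\bigr)$, and summing over $n$ yields a constant multiple of the harmonic series, which diverges.

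The delicate case is (i) for $0<r\le 1$, where the tail is only conditionally convergent and comparison fails. Here the decisive structural fact is the anti-periodicity $\sin_{p,q}(x+\pi_{p,q})=-\sin_{p,q}(x)$, obtained by applying the reflection identity $\sin_{p,q}(\pi_{p,q}-y)=\sin_{p,q}(y)$ at $y=-x$ together with oddness. Consequently the periodic factor $h(x):=|\sin_{p,q}x|^{r-1}\sin_{p,q}x$ satisfies $h(x+\pi_{p,q})=-h(x)$, so it has zero mean over any full period $2\pi_{p,q}$; therefore its primitive $H(x):=\int_0^x h(t)\,dt$ is $2\pi_{p,q}$-periodic and hence bounded. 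Integrating by parts in $\int_1^N h(x)x^{-r}\,dx=\bigl[H(x)x^{-r}\bigr]_1^N+r\int_1^N H(x)x^{-r-1}\,dx$, the boundary term vanishes as $N\to\infty$ because $r>0$ and $H$ is bounded, while the remaining integral converges absolutely since $\int_1^\infty x^{-r-1}\,dx<\infty$. For $r>1$ the integrand is instead dominated by $x^{-r}$, so the tail converges absolutely and no cancellation is needed.

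I expect the main obstacle to be establishing the boundedness of the primitive $H$ in part (i): everything hinges on recognising that the mean of $h$ over a full period vanishes, which in turn rests on the anti-periodicity of $\sin_{p,q}$. Once that is secured, the Dirichlet-type integration-by-parts argument is routine; the only additional care needed is to confirm that $h$ is genuinely continuous and $2\pi_{p,q}$-periodic, so that the fundamental theorem of calculus and the periodicity estimate on $H$ both apply.
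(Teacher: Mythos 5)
Your proposal is correct and takes essentially the same route as the paper: the tail of (i) is handled by the same Dirichlet-type argument (the primitive of $|\sin_{p,q}x|^{r-1}\sin_{p,q}x$ is $2\pi_{p,q}$-periodic and bounded because the integrand has zero mean over a period, then integrate by parts against $x^{-r}$), while (ii) uses the same comparison with $x^{-m}$ for $m>1$ and the same block-by-block harmonic-series lower bound for $m=1$. Your explicit justification of the zero mean via the anti-periodicity $\sin_{p,q}(x+\pi_{p,q})=-\sin_{p,q}x$ is a detail the paper leaves implicit, but the argument is the same.
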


\begin{proof}
Since $\sinc_{p,q}{x}$ is continuous in $[0,\infty)$, it suffices to observe the
integrability for $x$ sufficiently large. 

(i) Let $f(x):=|\sin_{p,q}{x}|^{r-1}\sin_{p,q}{x}$ and 
$$F(x):=\int_0^x f(t)\,dt, \quad x \geq 0.$$
Then, $F(x)$ is continuous in $[0,\infty)$. 
Since $f(x)$ is $2\pi_{p,q}$-periodic, we see that $F(x+2\pi_{p,q})-F(x)=F(2\pi_{p,q})=0$.
Thus, $F(x)$ is also $2\pi_{p,q}$-periodic.
Hence, $F(x)$ is bounded on $[0,\infty)$
and there exists $M>0$ such that $|F(x)| \leq M$ for all $x \in [0,\infty)$.
Therefore, when $0<u<v$,
\begin{align*}
\left| \int_u^v \frac{f(x)}{x^r}\,dx \right|
&=\left| \left[ \frac{F(x)}{x^r} \right]_u^v+r\int_u^v \frac{F(x)}{x^{r+1}}\,dx \right|\leq \frac{M}{v^r}+\frac{M}{u^r}-\frac{M}{v^r}+\frac{M}{u^r}=\frac{2M}{u^r}.
\end{align*}
As $u \to \infty$, Cauchy's test yields the convergence of the improper integral.

(ii) For $m>1$, since $|\sinc_{p,q}{x}|^m<1/x^m$, the convergence is trivial.

Consider the case $m=1$. For $n=0,1,2,\ldots$,
\begin{align*}
\int_{n \pi_{p,q}}^{(n+1)\pi_{p,q}} \left| \frac{\sin_{p,q}{x}}{x} \right| \,dx
&=\int_0^{\pi_{p,q}} \frac{\sin_{p,q}{x}}{n\pi_{p,q}+x} \,dx\\
&>\frac{1}{(n+1) \pi_{p,q}} \int_0^{\pi_{p,q}} \sin_{p,q}{x}\,dx\\
&=:\frac{C}{n+1},
\end{align*}
where $C=2B\left(2/q,1/p^*\right)/(q\pi_{p,q})$ by \cite[Theorem 3.1]{Takeuchi2019}.
Therefore,
$$\int_0^{n\pi_{p,q}} \left| \frac{\sin_{p,q}{x}}{x} \right| \,dx
>C\sum_{k=1}^n \frac{1}{k}.$$
The last summation diverges to $\infty$ as $n \to \infty$, and so does the left-hand side.
\end{proof}
Now we give analogues of the integrals in \eqref{dirint22}
for the set of non-standard trigonometric functions.
\begin{mythe}\label{dirintpq}
For $p,\ q\in (1,\infty)$, 
\begin{gather}
\int_0^\infty \frac{\sin_{p,q}{x}}{x}\,dx
=\frac{\pi}{2} \int_0^1 \frac{\sin_{p,q}{(\pi_{p,q}x)}}{\sin{(\pi x)}}\,dx, \label{eq:L1}\\
\int_0^\infty \left(\frac{\sin_{p,q}{x}}{x}\right)^2\,dx
=\frac{\pi^2}{2\pi_{p,q}} \int_0^1 \left[\frac{\sin_{p,q}{(\pi_{p,q}x)}}{\sin{(\pi x)}}\right]^2\,dx.
\label{eq:L2}
\end{gather}
In particular, when $p=q=2$,
\begin{equation}
\label{eq:si}
\int_0^\infty \frac{\sin{x}}{x}\,dx=\frac{\pi}{2}
=\int_0^\infty \left(\frac{\sin{x}}{x}\right)^2\,dx.
\end{equation}
\end{mythe}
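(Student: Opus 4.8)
The plan is to exploit the quasi-periodicity of $\sin_{p,q}$ to collapse the half-line integrals onto the single period $[0,\pi_{p,q}]$, and then to symmetrise the resulting kernel so that the classical Mittag-Leffler expansions $\pi/\sin(\pi x)=\sum_{n\in\mathbb{Z}}(-1)^n/(x+n)$ and $\pi^2/\sin^2(\pi x)=\sum_{n\in\mathbb{Z}}1/(x+n)^2$ appear. The starting point is the relation $\sin_{p,q}(n\pi_{p,q}+s)=(-1)^n\sin_{p,q}s$ for $s\in[0,\pi_{p,q}]$, which follows from the reflection rule $\sin_{p,q}(\pi_{p,q}-x)=\sin_{p,q}x$ together with $2\pi_{p,q}$-periodicity. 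Writing each improper integral as $\lim_{N\to\infty}\sum_{n=0}^{N-1}\int_{n\pi_{p,q}}^{(n+1)\pi_{p,q}}$ and substituting $x=n\pi_{p,q}+s$ in each block, the factor $1/x$ becomes $1/(n\pi_{p,q}+s)$ while the sign $(-1)^n$ is carried by $\sin_{p,q}$.

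For \eqref{eq:L1} this produces $\int_0^{\pi_{p,q}}\sin_{p,q}(s)\sum_{n=0}^{N-1}(-1)^n/(n\pi_{p,q}+s)\,ds$. To pass to the limit under the integral I would invoke uniform convergence: the series is alternating with terms decreasing to $0$, so its remainder after $N$ terms is bounded by $1/(N\pi_{p,q})$ uniformly in $s\ge 0$, whence the partial sums converge uniformly on $[0,\pi_{p,q}]$. After the rescaling $s=\pi_{p,q}x$ one is left with $\int_0^1 \sin_{p,q}(\pi_{p,q}x)\sum_{n=0}^\infty(-1)^n/(n+x)\,dx$. The key observation is that $\sin_{p,q}(\pi_{p,q}x)$ is invariant under $x\mapsto 1-x$ on $[0,1]$ (again by $\sin_{p,q}(\pi_{p,q}-y)=\sin_{p,q}y$), so the one-sided kernel $g(x)$ may be replaced by its symmetrisation $\tfrac12\bigl(g(x)+g(1-x)\bigr)$. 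Reindexing $n\mapsto -n-1$ identifies $\sum_{n\ge 0}(-1)^n/(n+1-x)$ with the negative tail of the bilateral series, so the symmetrised kernel becomes $\tfrac12\sum_{n\in\mathbb{Z}}(-1)^n/(x+n)=\pi/(2\sin\pi x)$, which is precisely \eqref{eq:L1}.

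For \eqref{eq:L2} the same blocking applies, but the square removes the sign and yields $\frac{1}{\pi_{p,q}}\int_0^1\sin_{p,q}^2(\pi_{p,q}x)\sum_{n=0}^\infty 1/(n+x)^2\,dx$; here every term is nonnegative, so the interchange of sum and integral is immediate by monotone convergence. Symmetrising about $x=1/2$ and reindexing as before gives $\tfrac12\sum_{n\in\mathbb{Z}}1/(x+n)^2=\pi^2/(2\sin^2\pi x)$, which is the asserted \eqref{eq:L2}. Finally, setting $p=q=2$ gives $\pi_{2,2}=\pi$ and $\sin_{2,2}=\sin$, so both right-hand integrands collapse to the constant $1$ on $[0,1]$ and \eqref{eq:si} follows at once. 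The one genuinely delicate point is the term-by-term integration in the linear case \eqref{eq:L1}: the half-line integral there is only conditionally convergent (cf.\ the Proposition above), so the exchange of limit and integral must be justified carefully — which is exactly what the uniform alternating-remainder bound secures, the square case being unproblematic.
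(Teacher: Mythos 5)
Your argument is correct and follows essentially the same route as the paper: both decompose the integral into period blocks via $\sin_{p,q}(n\pi_{p,q}+s)=(-1)^n\sin_{p,q}s$, rescale to $[0,1]$, and reduce to the partial-fraction expansions $\pi/\sin(\pi x)=\sum_{n\in\mathbb{Z}}(-1)^n/(x+n)$ and $\pi^2/\sin^2(\pi x)=\sum_{n\in\mathbb{Z}}1/(x+n)^2$. The only cosmetic difference is that the paper first extends to the whole real line (using that $\sinc_{p,q}$ is even) and works with the bilateral series directly, proving uniform convergence of its symmetric partial sums, whereas you stay on the half-line and recover the bilateral series by symmetrising the kernel about $x=1/2$; your alternating-remainder bound (for \eqref{eq:L1}) and monotone convergence (for \eqref{eq:L2}) are an adequate substitute for the paper's interchange argument.
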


\begin{proof}
Let $r \in \mathbb{N}$.
Since $\sinc_{p,q}{x}$ is an even function, 
\begin{align*}
\int_0^\infty \left(\frac{\sin_{p,q}{x}}{x}\right)^r\,dx
&=\frac12 \int_{-\infty}^{\infty} \left(\frac{\sin_{p,q}{x}}{x}\right)^r\,dx\\
&=\frac12 \sum_{n=-\infty}^{\infty} \int_{n\pi_{p,q}}^{(n+1)\pi_{p,q}}
\left(\frac{\sin_{p,q}{x}}{x}\right)^r\,dx.
\end{align*}
Setting $x=(t+n)\pi_{p,q}$, we have
\begin{equation}\label{eq:int}
\int_0^\infty \left(\frac{\sin_{p,q}{x}}{x}\right)^r\,dx
=\frac{1}{2\pi_{p,q}^{r-1}} \sum_{n=-\infty}^{\infty} \int_0^1 \sin_{p,q}^r{(\pi_{p,q}t)}
\frac{(-1)^{rn}}{(t+n)^r}\,dt.
\end{equation}


To guarantee the interchange between the infinite sum and the integral, it suffices to show that 
\begin{equation}
\label{eq:uc}
L_r(t):=\sum_{n=-\infty}^\infty \frac{(-1)^{rn}}{(t+n)^r}
\end{equation}
converges uniformly on $(0,1)$.
For any $N \in \mathbb{N}$ and $t \in (0,1)$, we define
$$S_{r,N}(t):=\sum_{n=-N}^N \frac{(-1)^{rn}}{(t+n)^r}.$$
For $N \ge 2$, we have
\begin{align*}
S_{r,N}(t)
&=\frac{1}{t^r}
+\frac{(-1)^r((t-1)^r+(t+1)^r)}{(t^2-1)^r}
+\sum_{n=2}^N \frac{(-1)^{rn}((t-n)^r+(t+n)^r)}{(t^2-n^2)^r}.
\end{align*}
For $t \in (0,1)$ and $n \geq  2$, clearly $t < n/2$. Then, 
it is easy to see that $|(t-n)^r+(t+n)^r| \leq c_1 n^{r-1}$ for some $c_1>0$
when $r$ is odd; and $|(t-n)^r+(t+n)^r| \leq c_2 n^r$ for some $c_2>0$
when $r$ is even. Similarly, $|(t^2-n^2)^r|>c_3n^{2r}$ for some $c_3>0$.
Thus, for $r \in \mathbb{N}$, letting $c:=\max\{c_1, c_2\}/c_3$ gives
$$\left| \frac{(-1)^{rn}((t-n)^r+(t+n)^r)}{(t^2-n^2)^r} \right|
< \frac{c}{n^2}$$
for $t \in (0,1)$ and $n \geq 2$.
Since $\sum_{n=2}^\infty (c/n^2)<\infty$, we conclude that
$S_{r,N}(t)$ converges uniformly to $L_r(t)$ on $(0,1)$ as $N \to \infty$.

By \eqref{eq:int} and \eqref{eq:uc}, we obtain
\begin{align}\label{Lr}
\int_0^\infty \left(\frac{\sin_{p,q}{x}}{x}\right)^r\,dx
=\frac{1}{2\pi_{p,q}^{r-1}} 
\int_0^1 \sin_{p,q}^r{(\pi_{p,q}t)} L_r(t) \,dt.
\end{align}
From \cite[1.422, p.44]{Grad2007}, 
$$L_1(t)=\frac{\pi}{\sin{(\pi t)}}, \quad L_2(t)=\frac{\pi^2}{\sin^2{(\pi t)}};$$
hence \eqref{eq:L1} and \eqref{eq:L2} hold.
\end{proof}

Regarding Theorem \ref{dirintpq}, 
it is worth pointing out that \eqref{eq:L1} and \eqref{eq:L2} contain \eqref{eq:si}. 
It follows from \cite[Lemma 4.2]{Takeuchi2014} that, the function $\sin_{p,q}{(\pi_{p,q}{x})}$ (with $x \in (0,1/2) \cup (1/2,1)$) is strictly 
decreasing in any of its parameters $p$ or $q \in (1, \infty)$ whenever the second is fixed. Therefore, the right-hand sides of \eqref{eq:L1} and \eqref{eq:L2}
are strictly decreasing functions in the same parameter. 
From this fact, we see that (letting $p=q$ for simplicity)
$$\int_0^\infty \frac{\sin_p{x}}{x}\,dx
\gtreqqless \frac{\pi}{2} \quad \mbox{and}\quad
\int_0^\infty \left(\frac{\sin_p{x}}{x}\right)^2\,dx
\gtreqqless \frac{\pi^2}{2\pi_p} \quad \mbox{if}\ p \lesseqqgtr 2.$$

It is also possible to obtain the formulas of integrals for $r \geq 3$.
By the summation theorems \cite[Theorem 4.4.1, p.305 \& Exercise 5, p.313]{marsden1999basic}, which follow from the residue theorem, we have
\begin{equation}
\label{eq:L_r}
L_r(t)=
-\frac{1}{(r-1)!}\lim_{z \to -t} \frac{d^{r-1}}{dz^{r-1}}\left[\frac{\pi}{\varphi(\pi z)}\right],
\end{equation}
where $\varphi(z)=\sin{z}$ if $r$ is odd; and $\varphi(z)=\tan{z}$ if $r$ is even. For instance, \eqref{eq:L_r} gives
$$L_3(t)=\frac{\pi^3(2-\sin^2{(\pi t)})}{2\sin^3{(\pi t)}},
\quad L_4(t)=\frac{\pi^4(3-2\sin^2{(\pi t)})}{3\sin^4{(\pi t)}};$$
hence by \eqref{Lr} we have,
\begin{gather*}
\int_0^\infty \left(\frac{\sin_{p,q}{x}}{x}\right)^3\,dx
=\frac{\pi^3}{4\pi_{p,q}^2} \int_0^1 \left[\frac{\sin_{p,q}{(\pi_{p,q}x)}}{\sin{(\pi x)}}\right]^3
(2-\sin^2{(\pi x)})\,dx,\\
\int_0^\infty \left(\frac{\sin_{p,q}{x}}{x}\right)^4\,dx
=\frac{\pi^4}{6\pi_{p,q}^3} \int_0^1 \left[\frac{\sin_{p,q}{(\pi_{p,q}x)}}{\sin{(\pi x)}}\right]^4
(3-2\sin^2{(\pi x)})\,dx.
\end{gather*}
These contain
$$\int_0^\infty \left(\frac{\sin{x}}{x}\right)^3\,dx=\frac{3\pi}{8},\quad
\int_0^\infty \left(\frac{\sin{x}}{x}\right)^4\,dx=\frac{\pi}{3},$$
respectively.

\begin{myrem}\label{remark1}
The values of the integrals \eqref{eq:L1} and \eqref{eq:L2} for $(p, q) \neq (2, 2)$
still remain as open questions.
\end{myrem}

Theorem \ref{dirintpq} shows a relationship between the integrals
of the left-hand sides of \eqref{eq:L1} and \eqref{eq:L2} as follows.

\begin{cor}
For $p,\ q \in (1,\infty)$,
\begin{equation}
\label{eq:schwarz}
\left( \int_0^\infty \frac{\sin_{p,q}{x}}{x}\,dx \right)^2
\leq \frac{\pi_{p,q}}{2} \int_0^\infty \left( \frac{\sin_{p,q}{x}}{x} \right)^2\,dx.
\end{equation}
Moreover, the equality holds if and only if $p=q=2$.
\end{cor}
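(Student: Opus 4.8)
The plan is to read both sides of \eqref{eq:schwarz} off Theorem \ref{dirintpq} and recognise the resulting inequality as a single instance of the Cauchy--Schwarz (equivalently Jensen) inequality on the unit interval. Writing
\[
g(x):=\frac{\sin_{p,q}{(\pi_{p,q}x)}}{\sin{(\pi x)}}, \qquad x\in(0,1),
\]
identities \eqref{eq:L1} and \eqref{eq:L2} say precisely that $\int_0^\infty (\sin_{p,q}{x})/x\,dx=(\pi/2)\int_0^1 g(x)\,dx$ and $\int_0^\infty ((\sin_{p,q}{x})/x)^2\,dx=(\pi^2/(2\pi_{p,q}))\int_0^1 g(x)^2\,dx$. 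Squaring the first gives $\big(\int_0^\infty (\sin_{p,q}{x})/x\,dx\big)^2=(\pi^2/4)\big(\int_0^1 g\big)^2$, while multiplying the second by $\pi_{p,q}/2$ gives $(\pi^2/4)\int_0^1 g^2$. Since the common prefactor $\pi^2/4$ matches with no slack, \eqref{eq:schwarz} is exactly equivalent to $\big(\int_0^1 g\big)^2\le\int_0^1 g^2$, which is Cauchy--Schwarz for $g$ against the constant $1$ over the probability space $(0,1)$; the inequality therefore follows immediately and needs no further estimation.

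The substantive part is the equality case, which I expect to be the main obstacle. First I would note that $g$ is continuous and strictly positive on $(0,1)$, since $\sin(\pi x)>0$ there and $\sin_{p,q}(\pi_{p,q}x)>0$ for $\pi_{p,q}x\in(0,\pi_{p,q})$. Equality in Cauchy--Schwarz forces $g$ to be constant almost everywhere, hence, by continuity, constant: $g\equiv c$. Evaluating at $x=1/2$, where $\sin_{p,q}(\pi_{p,q}/2)=1$ and $\sin(\pi/2)=1$, gives $c=1$, so $\sin_{p,q}(\pi_{p,q}x)=\sin(\pi x)$ for all $x\in(0,1)$. Taking the limit $x\to0^+$ and using $\sin_{p,q}{y}/y\to1$ and $\sin{y}/y\to1$ then yields $\pi_{p,q}/\pi=c=1$, i.e. $\pi_{p,q}=\pi$, whence $\sin_{p,q}{y}=\sin{y}$ for all $y\in(0,\pi)$.

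It then remains to deduce $p=q=2$ from $\sin_{p,q}=\sin$. I would invert this identity to $F_{p,q}=\arcsin$ on $[0,1]$ and differentiate, obtaining $(1-t^q)^{-1/p}=(1-t^2)^{-1/2}$ for all $t\in[0,1)$, equivalently $(1-t^q)^2=(1-t^2)^p$. Comparing the leading correction terms as $t\to0^+$, namely $-2t^q$ against $-p\,t^2$, forces the exponents to agree ($q=2$) and then the coefficients to agree ($p=2$). Conversely, when $p=q=2$ one has $\pi_{p,q}=\pi$ and $g\equiv1$, so equality holds; this closes the characterisation. As an alternative to the last step, the strict monotonicity of $\sin_{p,q}(\pi_{p,q}x)$ in each parameter recalled after Theorem \ref{dirintpq} could be used to exclude the remaining cases, but the direct computation via the defining Abelian integral is cleaner and fully self-contained.
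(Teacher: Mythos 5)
Your derivation of the inequality itself is exactly the paper's: apply Cauchy--Schwarz to the representation \eqref{eq:L1} and identify the resulting right-hand side via \eqref{eq:L2}; the constants match with no slack, as you say. The equality case also begins the same way in both arguments (linear dependence forces $\sin_{p,q}(\pi_{p,q}x)=k\sin(\pi x)$, and $x=1/2$ gives $k=1$), but you finish differently. The paper invokes the strict monotonicity of $x\mapsto\sin_{p,q}(\pi_{p,q}x)$ in each of the parameters $p$ and $q$ (the fact from \cite[Lemma 4.2]{Takeuchi2014} recalled after Theorem \ref{dirintpq}), which immediately pins down $(p,q)=(2,2)$ as the unique parameter pair for which the identity $\sin_{p,q}(\pi_{p,q}x)=\sin(\pi x)$ can hold. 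You instead extract $\pi_{p,q}=\pi$ from the limit at $0^+$, pass to the inverse functions to get $(1-t^q)^{-1/p}=(1-t^2)^{-1/2}$, and compare leading Taylor coefficients to force $q=2$ and then $p=2$. Your route is self-contained and elementary, trading the external monotonicity lemma for a short computation with the defining Abelian integral; the paper's route is shorter on the page but leans on the cited result. Both are correct, and you even flag the paper's alternative yourself.
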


\begin{proof}
Applying Schwarz's inequality to \eqref{eq:L1}, we obtain
$$\left(\int_0^\infty \frac{\sin_{p,q}{x}}{x}\,dx \right)^2
\leq \frac{\pi^2}{4}
\int_0^1 \left[\frac{\sin_{p,q}{(\pi_{p,q}x)}}{\sin{(\pi x)}}\right]^2\,dx.$$
By \eqref{eq:L2}, the right-hand side is equal to
$$\frac{\pi_{p,q}}{2} \int_0^\infty \left( \frac{\sin_{p,q}{x}}{x} \right)^2\,dx.$$

The equality of \eqref{eq:schwarz}
holds if and only if there exists a constant $k$ such that 
$\sin_{p,q}{(\pi_{p,q}x)}=k \sin{(\pi x)}$ for all $x \in [0,1]$. 
Then, as $x=1/2$, we see that $k=1$.
As described immediately after the proof of Theorem \ref{dirintpq},
$\sin_{p,q}{(\pi_{p,q}x)}$, $x \in (0,1/2) \cup (1/2,1)$,
is strictly decreasing in $p,\, q$. Therefore, we conclude $p=q=2$.
\end{proof}

The paper \cite{takeuchi2016multiple} states the so-called multiple-angle formula between two types of generalised trigonometric functions: for all $x \in \mathbb{R}$ and $q \in (1,\infty)$,
\begin{align}\label{eq:maf}
\sin_{2,q}{(2^{2/q}x)}=2^{2/q}\sin_{q^*,q}{x}|\cos_{q^*,q}{x}|^{q^*-2}\cos_{q^*,q}{x}.
\end{align}

This formula is an essential tool to establish the following result.

\begin{mythe}\label{sinqq'}
For $q\in (1,\infty)$, 
\begin{align}\label{the21}
\int_0^{\infty} \left|\frac{\sin_{q^*,q}{x}}{x}\right|^{q}\,dx=\frac{q^*}{2^{2/q}}\int_0^{\infty} \left|\frac{\sin_{2,q}{x}}{x}\right|^{q-2}\frac{\sin_{2,q}{x}}{x}\,dx.
\end{align}
Hence, when $q$ is even, 
$$\int_0^{\infty} \left(\frac{\sin_{q^*,q}{x}}{x}\right)^{q}\,dx=\frac{q^*}{2^{2/q}}\int_0^{\infty} \left(\frac{\sin_{2,q}{x}}{x}\right)^{q-1}\,dx;$$
in particular, when $q=2$,
$$\int_0^{\infty} \left(\frac{\sin{x}}{x}\right)^2\,dx=\int_0^{\infty} \frac{\sin{x}}{x}\,dx.$$
\end{mythe}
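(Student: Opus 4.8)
The plan is to transform the right-hand side of \eqref{the21} into the left-hand side by means of the multiple-angle formula \eqref{eq:maf}, a change of variables, and an integration by parts. First I would denote the right-hand integral (without the constant factor) by
$$I:=\int_0^{\infty} \left|\frac{\sin_{2,q}{x}}{x}\right|^{q-2}\frac{\sin_{2,q}{x}}{x}\,dx,$$
which converges by part (i) of the preceding Proposition with $r=q-1>0$. Substituting $x=2^{2/q}u$ and then inserting \eqref{eq:maf} replaces $\sin_{2,q}(2^{2/q}u)$ throughout by $2^{2/q}\sin_{q^*,q}{u}\,|\cos_{q^*,q}{u}|^{q^*-2}\cos_{q^*,q}{u}$.

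Next I would carry out the algebraic simplification. Writing $S=\sin_{q^*,q}{u}$ and $C=\cos_{q^*,q}{u}$, the integrand factor $|\sin_{2,q}(2^{2/q}u)|^{q-2}\sin_{2,q}(2^{2/q}u)$ becomes a power of two times $|S|^{q-2}S\,|C|^{(q^*-1)(q-1)-1}C$. The crucial point is that the exponent of $|C|$ vanishes: since $q^*=q/(q-1)$ one has $q^*-1=1/(q-1)$, hence $(q^*-1)(q-1)=1$ and the $|C|$-power disappears entirely. This collapses the integrand to $|S|^{q-2}S\,C$, which I would recognise as a total derivative,
$$|\sin_{q^*,q}{u}|^{q-2}\sin_{q^*,q}{u}\,\cos_{q^*,q}{u}=\frac{1}{q}\frac{d}{du}\,|\sin_{q^*,q}{u}|^{q},$$
using $\cos_{q^*,q}{u}=(\sin_{q^*,q}{u})'$.

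With the power-of-two bookkeeping done (the factor $(2^{2/q})^{2-q}$ from the substitution and $(2^{2/q})^{q-1}$ from the formula combine to a single $2^{2/q}$), I would be left with $I=\frac{2^{2/q}}{q}\int_0^\infty u^{-(q-1)}\frac{d}{du}|\sin_{q^*,q}{u}|^q\,du$ and would integrate by parts. The boundary term $[u^{-(q-1)}|\sin_{q^*,q}{u}|^q]_0^\infty$ vanishes at both ends: as $u\to\infty$ because $|\sin_{q^*,q}|\le1$ and $q>1$, and as $u\to0$ because $\sin_{q^*,q}{u}\sim u$, so the bracket behaves like $u$. What remains is $(q-1)\int_0^\infty u^{-q}|\sin_{q^*,q}{u}|^q\,du$, and using $(q-1)/q=1/q^*$ this gives $I=\frac{2^{2/q}}{q^*}\int_0^\infty |\sin_{q^*,q}{u}/u|^q\,du$, which rearranges to \eqref{the21}. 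The two consequences then follow routinely: for even $q$ the absolute values may be dropped since both $q$ and $q-2$ are even, and for $q=2$ one has $q^*=2$, $2^{2/q}=2$ and $\sin_{2,2}=\sin$, yielding the classical identity.

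I expect the main obstacle to be handling the absolute values and signs correctly when raising the multiple-angle expression to the power $q-2$ and multiplying by its first power; once the identity $(q^*-1)(q-1)=1$ is invoked to annihilate the cosine factor, the remainder is careful but routine bookkeeping, with the vanishing of the boundary terms the only genuinely analytic point, resting on the near-origin asymptotics $\sin_{q^*,q}{u}\sim u$.
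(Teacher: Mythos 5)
Your proposal is correct and is essentially the paper's own argument run in reverse: the paper starts from the left-hand side, integrates $x^{-q}|\sin_{q^*,q}x|^q$ by parts to produce $q|\sin_{q^*,q}x|^{q-2}\sin_{q^*,q}x\cos_{q^*,q}x$, and then invokes the multiple-angle formula \eqref{eq:maf} (with the same exponent cancellation $(q^*-1)(q-1)=1$ and the same rescaling $x\mapsto 2^{2/q}x$) to reach the right-hand side. All the key ingredients and the bookkeeping match; only the direction of travel differs.
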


\begin{proof}
From \eqref{eq:maf},
\begin{align*}
|\sin_{2,q}{(2^{2/q}x)}|^{q-2}\sin_{2,q}{(2^{2/q}x)}
=2^{2/q^*}|\sin_{q^*,q}{x}|^{q-2}\sin_{q^*,q}{x} \cos_{q^*,q}{x}.
\end{align*}
Using this identity, we obtain
\begin{align*}
\int_0^\infty \left|\frac{\sin_{q^*,q}{x}}{x}\right|^q\,dx
&=\left[\frac{1}{1-q}x^{1-q}|\sin_{q^*,q}{x}|^q\right]_0^\infty\\
& \qquad -\frac{1}{1-q}\int_0^\infty x^{1-q}\cdot q
|\sin_{q^*,q}{x}|^{q-2}\sin_{q^*,q}{x} \cos_{q^*,q}{x}\,dx\\
&=q^*\int_0^\infty
x^{1-q}\cdot 2^{-2/q^*}
|\sin_{2,q}{(2^{2/q}x)}|^{q-2}\sin_{2,q}{(2^{2/q}x)}\,dx\\
&=\frac{q^*}{2^{2/q}}
\int_0^{\infty} \left|\frac{\sin_{2,q}{x}}{x}\right|^{q-2}\frac{\sin_{2,q}{x}}{x}\,dx.
\end{align*}
Therefore, the proof is complete. 
\end{proof}

It is known that $\sin_{p,q}{x}$ 
satisfies \eqref{eq:ep} with $\lambda=q/p^*$, i.e.
$$-(|\cos_{p,q}{x}|^{p-2}\cos_{p,q}{x})'=\frac{q}{p^*}|\sin_{p,q}{x}|^{q-2}\sin_{p,q}{x}.$$
Integration by parts yields,
\begin{align}\label{rem2}
\int_0^\infty \frac{1-|\cos_{p,q}{x}|^{p-2}\cos_{p,q}{x}}{x^q}\,dx
=\frac{q^*}{p^*}\int_0^\infty \left| \frac{\sin_{p,q}{x}}{x} \right|^{q-2}
\frac{\sin_{p,q}{x}}{x}\,dx.
\end{align}
Now, from \eqref{rem2}, both sides of \eqref{the21} are equal to 
\begin{align*}
2^{1-2/q} \int_0^\infty \frac{1-\cos_{2,q}{x}}{x^q}dx.
\end{align*}
In particular, when $q=2$,
the famous equalities are obtained:
\[
\int_0^\infty \left(\frac{\sin{x}}{x}\right)^2\,dx=\int_0^\infty \frac{\sin{x}}{x}\,dx=\int_0^\infty \frac{1-\cos{x}}{x^2}\,dx.
\]

\section{The $L_m$-norm behaviour of $\sinc_{p,q}$ functions}\label{sec2}

Here in Theorem \ref{asyexp} below we pay a close attention to the $L_m$-norm behaviour of the $\sinc_{p,q}$ function and its asymptotic expansion (with explicit first two terms) for large values of $m$. Independent of that, we calculate the limit of this integral as $m \to \infty$. But before we embark on this study, we highlight the following lemma, which is essentially due to L.I. Paredes and K. Uchiyama \cite[Theorem 3.2]{paredes2003analytic}.

\begin{mylem}\label{convsin}
Let $p, q \in (1, \infty)$. Then, the function $\sin_{p,q}x$ has the convergent expansion near $x=0$:
\begin{align*}\label{sinpower}
\sin_{p,q} x=x-\frac{1}{p(q+1)}|x|^{q}x+\frac{1-p+3q-pq}{2p^2(q+1)(2q+1)}|x|^{2q}x+\cdots.
\end{align*}
\end{mylem}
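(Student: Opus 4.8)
The plan is to exploit the defining relation $\sin_{p,q}=F_{p,q}^{-1}$ and to obtain the claimed series by inverting the Maclaurin-type expansion of the Abelian integral $F_{p,q}$. First I would expand the integrand by the binomial series: for $|t|<1$,
\[
(1-t^q)^{-1/p}=\sum_{k=0}^\infty \frac{(1/p)(1/p+1)\cdots(1/p+k-1)}{k!}\,t^{qk},
\]
which converges uniformly on compact subsets of $(-1,1)$. Integrating term by term over $[0,x]$ then yields, for $x\ge 0$,
\[
F_{p,q}(x)=x+\frac{1}{p(q+1)}\,x^{q+1}+\frac{1+p}{2p^2(2q+1)}\,x^{2q+1}+\cdots=x\,h(x^q),
\]
where $h$ is analytic near $0$ with $h(0)=1$; here I would record only the first few coefficients, since these suffice to pin down the first three terms of the inverse.

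Next I would invert by undetermined coefficients. Writing the sought expansion as $\sin_{p,q}x=x+a\,x^{q+1}+b\,x^{2q+1}+\cdots$ for $x\ge 0$ and substituting into the identity $x=F_{p,q}(\sin_{p,q}x)$, I would match the coefficients of $x^{q+1}$ and $x^{2q+1}$. Only the leading power $x^q$ of each correction enters, so expanding $(\sin_{p,q}x)^{q+1}$ and $(\sin_{p,q}x)^{2q+1}$ to the required order is routine. The coefficient of $x^{q+1}$ gives $a=-1/(p(q+1))$, while the coefficient of $x^{2q+1}$ gives $b=-a/p-(1+p)/(2p^2(2q+1))$, which, after putting over the common denominator $2p^2(q+1)(2q+1)$, simplifies to $(1-p+3q-pq)/(2p^2(q+1)(2q+1))$, exactly as stated.

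The substance of the lemma, and the main obstacle, is not the values of the coefficients but the assertion of \emph{convergence}. For non-integer $q$ both $F_{p,q}(x)$ and its inverse have the structure ``$x$ times an analytic function of $x^q$'' rather than an ordinary power series in $x$, so the classical analytic inverse function theorem does not apply verbatim. I would therefore pass to the variable $s=x^q$, in which $F_{p,q}(x)/x=h(s)$ is genuinely analytic at $s=0$ with $h(0)=1$; the existence of a convergent inverse expansion $\sin_{p,q}x=x\,\Phi(x^q)$, with $\Phi$ analytic and $\Phi(0)=1$, is precisely the content of the analytic-inversion result of Paredes and Uchiyama \cite[Theorem 3.2]{paredes2003analytic}, which I would invoke directly to secure convergence in a neighbourhood of $0$. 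Finally, since $\sin_{p,q}$ is odd, replacing $x^{q+1}$ and $x^{2q+1}$ by the odd quantities $|x|^q x$ and $|x|^{2q}x$ extends the expansion from $x\ge 0$ to a full neighbourhood of the origin, completing the proof.
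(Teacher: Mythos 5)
Your proposal is correct: the coefficient computation checks out (expanding $(1-t^q)^{-1/p}$ gives $F_{p,q}(x)=x+\tfrac{1}{p(q+1)}x^{q+1}+\tfrac{1+p}{2p^2(2q+1)}x^{2q+1}+\cdots$, and the inversion indeed yields $a=-1/(p(q+1))$ and $b=(1-p+3q-pq)/(2p^2(q+1)(2q+1))$). But your route differs from the paper's. The paper does not touch the Abelian integral at all: it observes that $u(t)=\sin_{p,q}(kt)$ with $k^p=p^*/q$ solves the $p$-Laplacian initial value problem with $\lambda=1$, applies Paredes--Uchiyama's Theorem 3.2 to that ODE solution --- which delivers the convergent expansion \emph{together with} its explicit coefficients in terms of $p,q,k$ --- and then rescales $x=kt$. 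You instead derive the coefficients by hand from the binomial series of $F_{p,q}$ and reserve the citation for the convergence assertion only. Your approach has the advantage of making the coefficients transparent and verifiable without unpacking the cited theorem; the paper's has the advantage of brevity and of getting convergence and coefficients from a single application of a known result. One caveat: Paredes--Uchiyama's Theorem 3.2 is a statement about analytic singularities of solutions of the nonlinear ODE, not an inversion theorem for Abelian integrals, so describing it as ``the analytic-inversion result'' and invoking it for the map $F_{p,q}$ directly is a slight mismatch. To use it you should, as the paper does, route through the ODE satisfied by $\sin_{p,q}(kt)$; alternatively, your $s=x^q$ idea can be made self-contained without the citation: from $y=x\,h(x^q)$ one gets $y^q=x^q\,h(x^q)^q$, and since $s\mapsto s\,h(s)^q$ is analytic near $0$ with nonvanishing derivative (as $h(0)=1$), the classical analytic inverse function theorem gives $x^q=K(y^q)$ and hence $x=y/h(K(y^q))=y\,\Phi(y^q)$ with $\Phi$ analytic. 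Either repair closes the only soft spot in your argument.
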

\begin{proof}
The function $u(t)=\sin_{p,q}(kt)$ satisfies 
\eqref{eq:ep} with $\lambda=1$
on $\left(-\frac{\pi_{p,q}}{k},\frac{\pi_{p,q}}{k}\right)$ when $k^p=p^*/q$. Therefore, 
applying \cite[Theorem 3.2]{paredes2003analytic} to our case,
with $\sigma=0$ and $A=k$, we see that $u(t)$ has the convergent expansion near $t=0$:
$$u(t)=kt-\frac{k^{q-p+1}}{(p-1)q(q+1)}|t|^qt+\frac{(1-p+3q-pq)k^{2q-2p+1}}{2(p-1)^2q^2(q+1)(2q+1)}|t|^{2q}t+\cdots.$$
Setting $x=kt$ gives the expansion in the lemma.
\end{proof}
Let us now explore the behaviour of the $L_m$-norm of $\sinc_{p,q}$ when $m$ is large enough. 

For $m, p, q \in (1, \infty)$, define
\[
I_{p, q}(m):= m^{1/q}\int_0^\infty \left\vert\frac{\sin_{p, q} x}{x}\right\vert^m dx.
\]
Then we can show the following lemma.
\begin{mylem}
\label{lem:Ipqm}
Let $p,\ q \in (1,\infty)$. Then, for any $\alpha \in (0,\infty)$, 
\begin{align*}\label{Ipqm}
\lim_{m \to \infty}I_{p, q}(m)= \lim_{m \to \infty}m^{1/q}\int_0^\alpha \left\vert\frac{\sin_{p, q} x}{x}\right\vert^m dx.
\end{align*}
Hence, the value of the limit is independent of $\alpha$.
\end{mylem}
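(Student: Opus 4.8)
The plan is to reduce the assertion to a single tail estimate. By the preceding Proposition the integral $\int_0^\infty \left|\frac{\sin_{p,q}x}{x}\right|^m\,dx$ converges for every $m>1$, so for any fixed $\alpha\in(0,\infty)$ I may split $I_{p,q}(m)=m^{1/q}\int_0^\alpha\left|\frac{\sin_{p,q}x}{x}\right|^m\,dx+m^{1/q}\int_\alpha^\infty\left|\frac{\sin_{p,q}x}{x}\right|^m\,dx$. Consequently the claimed equality of limits follows as soon as I show that the tail term satisfies $\lim_{m\to\infty}m^{1/q}\int_\alpha^\infty\left|\frac{\sin_{p,q}x}{x}\right|^m\,dx=0$. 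Writing $g(x):=\left|\sinc_{p,q}x\right|=\left|\frac{\sin_{p,q}x}{x}\right|$, the entire argument then rests on controlling how close $g$ can come to $1$ on $[\alpha,\infty)$.

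First I would show that $\delta:=\sup_{x\ge\alpha}g(x)<1$. The key point is that $g(x)<1$ for every $x>0$: on $(0,\pi_{p,q}/2]$ this follows from Lemma \ref{sincpmonotonic}, since $g$ is strictly decreasing there with $g(0^+)=1$. Evaluating at the endpoint gives $g(\pi_{p,q}/2)=2/\pi_{p,q}<1$, that is $\pi_{p,q}>2$, and then for $x>\pi_{p,q}/2>1$ one has $g(x)\le 1/x<1$ because $|\sin_{p,q}x|\le1$. Since $g$ is continuous on $[\alpha,\infty)$ and $g(x)\to0$ as $x\to\infty$, there exists $R>\alpha$ with $g<1/2$ on $[R,\infty)$; on the compact interval $[\alpha,R]$ the continuous function $g$ attains a maximum that is $<1$, and comparing the two ranges yields $\delta<1$.

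With $\delta<1$ established, I would bound the tail by splitting it at $A:=\max\{\alpha,2\}$. On $[\alpha,A]$ I use the uniform bound $g\le\delta$, giving $m^{1/q}\int_\alpha^A g^m\,dx\le (A-\alpha)\,m^{1/q}\delta^m\to0$, since the exponential factor $\delta^m$ dominates the polynomial factor $m^{1/q}$. On $[A,\infty)$, where $A\ge2>1$, I use instead $g(x)\le 1/x$ (again from $|\sin_{p,q}x|\le1$), so that $m^{1/q}\int_A^\infty g^m\,dx\le m^{1/q}\int_A^\infty x^{-m}\,dx=\frac{m^{1/q}}{m-1}A^{1-m}\to0$. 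Adding the two estimates shows the tail tends to $0$, which completes the reduction. The only genuine subtlety, and the step I expect to be the main obstacle, is the uniform bound $\delta<1$: because $\sin_{p,q}$ is $2\pi_{p,q}$-periodic, $g$ possesses infinitely many local maxima, so the pointwise inequality $g<1$ does not by itself force $g$ to stay bounded away from $1$ on the unbounded interval $[\alpha,\infty)$; it is precisely the decay $g(x)\to0$ at infinity, combined with compactness on a finite subinterval, that secures the uniform estimate.
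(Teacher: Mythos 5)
Your proof is correct and takes essentially the same approach as the paper: split off the tail at $\alpha$, bound it by an exponentially small term $\delta^m$ on the bounded part and by $\int x^{-m}\,dx$ far out, and note that $m^{1/q}$ cannot compete with either decay. The only minor difference is that you establish the uniform bound $\sup_{x\ge\alpha}\vert\sinc_{p,q}x\vert<1$ via a compactness-plus-decay-at-infinity argument, whereas the paper obtains the needed bound more directly from the strict monotonicity of $\sinc_{p,q}$ on $\left(0,\tfrac{\pi_{p,q}}{2}\right]$ together with the fact that $\pi_{p,q}/2>1$.
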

\begin{proof}
For $\alpha \in (0, \infty)$, we define $I_{p,q}(m)$ as
\[
I_{p, q}(m)= m^{1/q}\int_0^\alpha \left\vert\frac{\sin_{p, q} x}{x}\right\vert^m dx+m^{1/q}\int_\alpha^\infty \left\vert\frac{\sin_{p, q} x}{x}\right\vert^mdx.
\]

Let $\alpha \in [1, \infty)$. In this case,
\begin{align*}
 \int_\alpha ^\infty \left\vert \frac{\sin_{p,q} x}{x} \right\vert^mdx &= \displaystyle\lim_{\beta \rightarrow \infty}  \int_\alpha ^\beta \left\vert \frac{\sin_{p, q} x}{x} \right\vert^m dx\leq \displaystyle\lim_{\beta \rightarrow \infty} \int_\alpha ^\beta x^{-m} \mathrm{d}x=\frac{1}{m-1}\alpha^{1-m}.
\end{align*}
Then,
\[
\displaystyle\lim_{m \rightarrow \infty} m^{1/q} \int_\alpha ^\infty \left\vert \frac{\sin_{p, q} x}{x} \right\vert^m dx \leq \displaystyle\lim_{m \rightarrow \infty} \frac{m^{1/q}}{(m-1)\alpha^{m-1}}=0.
\]

Next, when $\alpha \in(0,1)$,
\begin{align*}
\int_\alpha ^\infty \left\vert \frac{\sin_{p, q} x}{x} \right\vert^m dx &= \int_\alpha ^1 \left\vert \frac{\sin_{p, q} x}{x} \right\vert^m dx + \int_1 ^\infty \left\vert \frac{\sin_{p, q} x}{x} \right\vert^m dx\\
&\leq \int_\alpha ^1 \left\vert \frac{\sin_{p, q} x}{x} \right\vert^m dx + \frac{1}{m-1}.
\end{align*}
From \cite[(2.17), p.39]{EdmundsLang2011}, observe that $(\alpha, 1) \subset (0, \frac{\pi_{p, q}}{2})$. Then by Lemma \ref{sincpmonotonic} \eqref{1b} we have
\[
0<\frac{\sin_{p, q} x}{x}<\frac{\sin_{p, q} \alpha}{\alpha}<1, \qquad x \in (\alpha, 1).
\]
Then, 
\[
\displaystyle\lim_{m \rightarrow \infty} m^{1/q} \int_\alpha ^\infty \left\vert \frac{\sin_{p, q} x}{x} \right\vert^m dx  \leq \displaystyle\lim_{m \rightarrow \infty} m^{1/q}\left[\left(\frac{\sin_{p, q} \alpha}{\alpha}\right)^m(1-\alpha)+\frac{1}{m-1}\right]=0,
\]
and the lemma follows.
\end{proof}

With the aid of Lemma \ref{lem:Ipqm}, we now present Theorem \ref{asyexp} and Corollary \ref{pqBall} below. 
Because of the independency of $\alpha$ in Lemma \ref{lem:Ipqm}, it suffices to complete the study on a positive interval $(0,\alpha)$ for some $\alpha>0$.
In what follows we use the notation
$f(x) \sim g(x)$ for $f(x)/g(x) \to 1$ as $x \to \infty$.
\begin{mythe}\label{asyexp}
Let $p, q\in(1,\infty)$. Then, there exist constants $\gamma_2,\ \gamma_3,\ \ldots$, independent of $m$, such that 
for $m$ large enough
\begin{multline*}
I_{p,q}(m) \sim \frac{1}{q} \Gamma \left(\frac{1}{q}\right) (p(q+1))^{1/q}
\left[ 1-\frac{(q+1)(pq^2+2pq-3q^2+p-2q)}{2q^2(2q+1)}\frac{1}{m}\right]\\
+\frac{1}{q}\sum_{j=2}^\infty \Gamma\left(j+\frac{1}{q}\right)\frac{\gamma_j}{m^j}.
\end{multline*}
\end{mythe}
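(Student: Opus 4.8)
The plan is to recognise $I_{p,q}(m)$ as a Laplace-type integral concentrated at the origin and to extract its full asymptotic series by Watson's lemma. By Lemma \ref{lem:Ipqm} the whole expansion is unaffected by truncating the integral to a small interval $(0,\alpha)$; so fix $\alpha \in (0,\pi_{p,q}/2)$, where by Lemma \ref{sincpmonotonic} the factor $\sinc_{p,q}x$ is positive, strictly decreasing, and bounded by $1$. On this interval I would write
\[
\left(\frac{\sin_{p,q}x}{x}\right)^m = e^{m\phi(x)}, \qquad \phi(x):=\log \sinc_{p,q}x,
\]
so that $\phi(0)=0$, $\phi<0$ on $(0,\alpha]$, and the mass of the integral sits near $x=0$.

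Next I would expand $\phi$ near the origin. Lemma \ref{convsin} gives $\sinc_{p,q}x = 1 - A x^q + B x^{2q}+\cdots$ with $A = 1/(p(q+1))$ and $B = (1-p+3q-pq)/(2p^2(q+1)(2q+1))$, a convergent series in $y:=x^q$. Taking the logarithm yields
\[
\phi(x) = -A x^q + \Bigl(B-\tfrac{A^2}{2}\Bigr)x^{2q}+\cdots,
\]
again a series in $y$. I would then perform the Laplace substitution $w:=-\phi(x)$. Viewed in the variable $y$, $w = W(y)=Ay+\cdots$ is analytic with $W'(0)=A>0$, hence locally invertible; the global monotonicity of $\sinc_{p,q}$ from Lemma \ref{sincpmonotonic} makes $x \mapsto w$ a genuine bijection of $(0,\alpha)$ onto an interval $(0,\delta)$. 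Inverting gives $y = V(w)$ analytic with $V(0)=0$, $V'(0)=1/A$, so that $x = V(w)^{1/q}$ and
\[
\frac{dx}{dw} = \frac1q A^{-1/q} w^{1/q-1} R(w),
\]
where $R$ is analytic near $0$ with $R(0)=1$; write $R(w)=\sum_{j\ge0} r_j w^j$, $r_0=1$.

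The integral now reads $\int_0^\delta e^{-mw}\,\tfrac{dx}{dw}\,dw$, to which Watson's lemma applies term by term, yielding
\[
\int_0^\alpha e^{m\phi(x)}\,dx \sim \frac1q A^{-1/q}\sum_{j=0}^\infty r_j\,\Gamma\!\Bigl(j+\tfrac1q\Bigr)\,m^{-j-1/q}.
\]
Multiplying by $m^{1/q}$ and using $A^{-1/q}=(p(q+1))^{1/q}$ gives the stated form: the $j=0$ term produces the leading constant $\tfrac1q\Gamma(1/q)(p(q+1))^{1/q}$ (since $r_0=1$), and for $j\ge2$ one sets $\gamma_j:=(p(q+1))^{1/q} r_j$, which are manifestly independent of $m$. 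The remainder bookkeeping is routine once the analyticity of $R$ is secured.

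The one genuinely laborious step, and the main obstacle, is the explicit $1/m$ coefficient, which forces me to carry the expansion to order $x^{2q}$: from the substitution one finds $r_1 = (q+1)(B-\tfrac{A^2}{2})/(qA^2)$, and inserting the values of $A$ and $B$ and simplifying collapses $B-\tfrac{A^2}{2}$ to $-(pq^2+2pq-3q^2+p-2q)/(2p^2(q+1)^2(2q+1))$, whence
\[
r_1 = -\frac{(q+1)(pq^2+2pq-3q^2+p-2q)}{2q(2q+1)}.
\]
Since $\Gamma(1+1/q)=\tfrac1q\Gamma(1/q)$, the $j=1$ term equals $\tfrac1q\Gamma(1/q)(p(q+1))^{1/q}\cdot\tfrac{r_1}{q}\,m^{-1}$, and $r_1/q$ is exactly the bracketed coefficient in the statement. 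The careful algebra verifying this cancellation is the heart of the computation; everything else is structural.
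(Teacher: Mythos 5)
Your proposal is correct and follows essentially the same route as the paper: both truncate to a neighbourhood of the origin via Lemma \ref{lem:Ipqm}, use the series of Lemma \ref{convsin} to expand $-\log\sinc_{p,q}$ in powers of $x^q$, and apply Watson's lemma to the resulting Laplace integral, the only difference being that the paper invokes Olver's Theorem 8.1 (whose proof is exactly your change of variables $w=-\phi(x)$) and reads $\gamma_0,\gamma_1$ off his formula (8.07), while you carry out the substitution and the inversion by hand. Your values of $r_1$ and of $B-\tfrac{A^2}{2}$ agree with the paper's $f_0=-a_1$, $f_1=-a_2+\tfrac{a_1^2}{2}$ computation, so the coefficient of $1/m$ checks out.
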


\begin{proof}
Let $\alpha \in (0, 1)$. Then, by Lemma \ref{lem:Ipqm},
\begin{align}\label{Jm}
I_{p,q}(m) \sim J(m,\alpha):=m^{1/q}\int_0^{\alpha} 
\left(\frac{\sin_{p,q}{x}}{x}\right)^m\,dx.
\end{align}

In what follows, we observe the asymptotic expansion of $J(m,\alpha)$
instead of $I_{p,q}(m)$.
To use Theorem 8.1 in Olver's book \cite[p.86]{olver1974asymptotics}, we rewrite $J(m,\alpha)$ as
\begin{align*}
J(m,\alpha)
&=m^{1/q}\int_0^\alpha \exp{\left( m \ln{\left(\frac{\sin_{p,q}{x}}{x}\right)}\right)}\,dx\\
&=\frac{m^{1/q}}{q} \int_0^{\alpha^q} e^{-mf(t)}g(t)\,dt,
\end{align*}
where
\begin{gather*}
f(t):=-\ln{ \left( \frac{\sin_{p,q}{(t^{1/q})}}{t^{1/q}} \right) },\quad
g(t):=t^{1/q-1}.
\end{gather*}
It is easy to see that $f(t)$ and $g(t)$ satisfy (i),\ (ii), and (iv) in \cite[\S{7.2}]{olver1974asymptotics}.
We need to find 
constants $\mu,\ f_j,\ \lambda$ and $g_j\ (j=0,1,2,\ldots)$ such that $\mu, \lambda>0, \ f_0, g_0 \neq0$ and
as $t \to 0^+$,  
\begin{gather*}
f(t) \sim f(0)+\sum_{j=0}^\infty f_j t^{j+\mu},\quad
g(t) \sim \sum_{j=0}^\infty g_j t^{j+\lambda-1}.
\end{gather*}
It is clear that
\begin{equation}
\label{eq:g_j}
\lambda=\frac{1}{q},\quad g_0=1,\quad g_j=0\ (j \geq 1).
\end{equation}
Regarding $\mu$ and $f_j$, we will expand $f(t)$.
By Lemma \ref{convsin}, taking $\alpha$ small if necessary, we obtain  
$$\frac{\sin_{p,q}{(t^{1/q})}}{t^{1/q}}=\sum_{j=0}^\infty a_j t^j, \quad t \in (0,\alpha^q),$$
where the first three terms of $a_j$ are
\begin{equation}
\label{eq:a_j}
a_0=1,\quad a_1=-\frac{1}{p(q+1)},\quad a_2=\frac{1-p+3q-pq}{2p^2(q+1)(2q+1)}.
\end{equation}
Since
\[
\left\vert \sum_{j=1}^\infty a_j t^j \right\vert =\left\vert 
\frac{\sin_{p,q}{(t^{1/q})}}{t^{1/q}}-1\right\vert<1, \quad t \in(0, \alpha^q),
\]
it is immediate that for $t \in(0, \alpha^q)$,
\begin{align*}
f(t)
=-\ln{\left(1+\sum_{j=1}^\infty a_j t^j\right)}
=\sum_{k=1}^\infty \frac{(-1)^k}{k} \left(\sum_{j=1}^\infty a_j t^j\right)^k,
\end{align*}
which yields
\begin{align*}
f(t)=-a_1t+\left(-a_2+\frac{a_1^2}{2}\right)t^2+O(t^3).
\end{align*}
This means that $\mu$ and the first two terms of $f_j$ are
\begin{equation}
\label{eq:f_j}
\mu=1,\quad f_0=-a_1,\quad f_1=-a_2+\frac{a_1^2}{2}.
\end{equation}

We are now in a position to give the asymptotic expansion of $J(m,\alpha)$.
Applying \cite[Theorem 8.1, p.86]{olver1974asymptotics} to 
our case, with $x=m,\ p(t)=f(t),\ q(t)=g(t),\ s=j,\ \mu=1$ and $\lambda=1/q$,
we establish the existence of real constants $\gamma_j,\ j=0,1,2\ldots$,
such that as $m \to \infty$,
\begin{align*}
\label{eq:jsum}
J(m,\alpha)
& \sim \frac{m^{1/q}}{q} e^{-mf(0)} \sum_{j=0}^\infty \Gamma \left(j+\frac{1}{q}\right)
\frac{\gamma_j}{m^{j+1/q}}
=\frac{1}{q}\sum_{j=0}^\infty \Gamma \left(j+\frac{1}{q}\right)
\frac{\gamma_j}{m^{j}},
\end{align*}
where $\Gamma$ is the gamma function.

The coefficients $\gamma_0$ and $\gamma_1$ are given in \cite[(8.07), p.86]{olver1974asymptotics} as follows: 
by \eqref{eq:g_j}, \eqref{eq:a_j} and \eqref{eq:f_j} we obtain
\begin{align*}
\gamma_0=\frac{g_0}{\mu f_0^{\lambda/\mu}}=(p(q+1))^{1/q},
\end{align*}
and 
\begin{align*}
\gamma_1
&=\left(\frac{g_1}{\mu}-\frac{(\lambda+1)f_1g_0}{\mu^2 f_0} \right)
\frac{1}{f_0^{(\lambda+1)/\mu}}=-\left(\frac{1}{q}+1\right) (p(q+1))^{2+1/q} \left(-a_2+\frac{a_1^2}{2}\right)\\
&=-\frac{(p(q+1))^{1/q}(q+1)(pq^2+2pq-3q^2+p-2q)}{2q(2q+1)}.
\end{align*}
This completes the proof.
\end{proof}
Next we present an immediate corollary of Theorem \ref{asyexp} on the limit of the integral $I_{p,q}(m)$ as $m \to \infty$. Alternatively and independent of this, we give a self-contained proof for 
the corollary.
\begin{cor}\label{pqBall}
Let $p,\ q \in (1,\infty)$. Then,
$$\lim_{m \to \infty} I_{p,q}(m)=\frac{1}{q}\Gamma \left(\frac{1}{q}\right)(p(q+1))^{1/q}.$$
\end{cor}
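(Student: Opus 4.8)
The quickest route is to observe that the stated value is simply the $m \to \infty$ limit of the asymptotic expansion in Theorem \ref{asyexp}: every term carrying a factor $m^{-j}$ with $j \geq 1$ vanishes, leaving only the leading coefficient $\frac{1}{q}\Gamma(1/q)(p(q+1))^{1/q}$. For a self-contained argument that avoids Olver's machinery, the plan is to combine Lemma \ref{lem:Ipqm} with a Laplace-type estimate. By Lemma \ref{lem:Ipqm}, for any $\alpha \in (0,1)$ we may replace $I_{p,q}(m)$ by $m^{1/q}\int_0^\alpha (\sin_{p,q}{x}/x)^m\,dx$, dropping the absolute value since $\sin_{p,q}{x}/x > 0$ on $(0,1) \subset (0,\pi_{p,q}/2)$.

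Next I would extract the leading behaviour of the integrand near $x = 0$. By Lemma \ref{convsin}, $\sin_{p,q}{x}/x = 1 - \frac{1}{p(q+1)}x^q + O(x^{2q})$, whence $\ln(\sin_{p,q}{x}/x)/x^q \to -1/(p(q+1))$ as $x \to 0^+$. Consequently, given any $\epsilon \in (0,1)$, one can shrink $\alpha$ so that on $(0,\alpha)$
$$-\frac{1+\epsilon}{p(q+1)}x^q \leq \ln \frac{\sin_{p,q}{x}}{x} \leq -\frac{1-\epsilon}{p(q+1)}x^q,$$
and hence, after raising to the $m$-th power and integrating, the quantity $m^{1/q}\int_0^\alpha (\sin_{p,q}{x}/x)^m\,dx$ is squeezed between the two integrals $m^{1/q}\int_0^\alpha \exp\left(-\frac{(1\pm\epsilon)m}{p(q+1)}x^q\right)\,dx$.

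Finally I would evaluate the bounding integrals in closed form via the substitution $u = \frac{(1\pm\epsilon)m}{p(q+1)}x^q$, which turns each into $\frac{1}{q}\left(\frac{p(q+1)}{1\pm\epsilon}\right)^{1/q}\int_0^{U} e^{-u}u^{1/q-1}\,du$ with upper limit $U = \frac{(1\pm\epsilon)m}{p(q+1)}\alpha^q \to \infty$. As $m \to \infty$ the incomplete integral tends to $\Gamma(1/q)$, so both bounds converge to $\frac{1}{q}\Gamma(1/q)\left(\frac{p(q+1)}{1\pm\epsilon}\right)^{1/q}$; letting $\epsilon \to 0^+$ pins the common value to $\frac{1}{q}\Gamma(1/q)(p(q+1))^{1/q}$, as claimed. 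The step demanding genuine care is the two-sided logarithmic bound: it must hold uniformly on the whole truncated interval $(0,\alpha)$, which is precisely where the convergence (not merely the formal validity) of the expansion in Lemma \ref{convsin} is invoked, and where $\alpha$ must be taken small enough to absorb the $O(x^{2q})$ remainder into the $\epsilon$-slack.
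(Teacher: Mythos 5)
Your argument is correct, and it reaches the limit by a genuinely different route from the paper. The paper's self-contained proof substitutes $y=\sin_{p,q}{x}$ in $J(m,\alpha)$, invokes the Bhayo--Vuorinen two-sided bound $(1-y^q)^{1/(p(q+1))}<y/\sin_{p,q}^{-1}{y}<\bigl(1+\tfrac{y^q}{p(q+1)}\bigr)^{-1}$, which holds on the whole interval, reduces both bounds to Beta integrals, and extracts the asymptotics from $\Gamma(m+a)/\Gamma(m+b)\sim m^{a-b}$; the extraneous factor $(1-\ep^q)^{-1/p}$ in the upper bound is then removed by letting $\ep\to 0^+$. You instead stay in the $x$ variable and use only the local expansion of Lemma \ref{convsin} to trap $\ln(\sin_{p,q}{x}/x)$ between $-\tfrac{(1\pm\epsilon)}{p(q+1)}x^q$ on a sufficiently small interval, reducing everything to incomplete Gamma integrals; the price is that $\alpha$ depends on $\epsilon$, which is harmless because Lemma \ref{lem:Ipqm} (more precisely, the fact proved there that the tail $m^{1/q}\int_\alpha^\infty\to 0$) lets you pass the $\liminf$ and $\limsup$ of $I_{p,q}(m)$ to the truncated integral for each fixed $\epsilon$ before sending $\epsilon\to 0^+$. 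What your route buys is independence from the external Bhayo--Vuorinen inequality and from Stirling-type Gamma-ratio asymptotics; what the paper's route buys is explicit, non-asymptotic two-sided bounds valid for all $m$ on a fixed interval. Your opening observation, that the limit also falls out of Theorem \ref{asyexp} by discarding the $O(1/m)$ terms, matches the paper's own remark that the corollary is immediate from that theorem, with the independent proof offered as an alternative. One small point to make explicit when writing this up: you need $\sinc_{p,q}{x}<1$ for $x\in(0,\alpha)$ (Lemma \ref{sincpmonotonic}) so that $\ln\sinc_{p,q}{x}<0$ and the division by $x^q$ in your limiting statement behaves as claimed; this is available and costs nothing.
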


\begin{proof}
Let $\ep \in (0,1)$ be any number and $\alpha:=\sin_{p,q}^{-1}{\ep} \in \left(0,
\frac{\pi_{p,q}}{2}\right)$.
For this $\alpha$, we
define $J(m, \alpha)$ as \eqref{Jm}. Changing the variable in the integral $J(m,\alpha)$ to $y=\sin_{p,q}{x}$ we get
$$J(m,\alpha)=m^{1/q}\int_0^\ep \left(\frac{y}{\sin_{p,q}^{-1}{y}}\right)^m
(1-y^q)^{-1/p}\,dy.$$
From the result of Bhayo and Vuorinen \cite[Theorem 1.1 (1)]{bhayo2012generalized}, 
we have
\begin{equation*}
(1-y^q)^{1/(p(q+1))}<\frac{y}{\sin_{p,q}^{-1}{y}}<\left(1+\frac{y^q}{p(q+1)}\right)^{-1},
\quad y \in (0,\ep).
\end{equation*}
Thus,
\begin{equation}
\label{eq:Jineq}
\int_0^\ep (1-y^q)^{m/(p(q+1))-1/p}\,dy
<\frac{J(m,\alpha)}{m^{1/q}}
<\int_0^\ep \left(1+\frac{y^q}{p(q+1)}\right)^{-m}(1-y^q)^{-1/p}\,dy.
\end{equation}

We denote by $L(m,\ep)$ the left-hand side of \eqref{eq:Jineq}
and by $R(m,\ep)$ the right-hand side of \eqref{eq:Jineq}.

$L(m,\ep)$ is estimated as follows.
\begin{align*} 
L(m,\ep)
=\int_0^1-\int_\ep^1 (1-y^q)^{m/(p(q+1))-1/p}\,dy
=:L_1-L_2.
\end{align*}
We obtain
\begin{align}
L_1
&=\frac{1}{q}\int_0^1 z^{1/q-1}(1-z)^{m/(p(q+1))-1/p}\,dz =\frac{1}{q}B\left(\frac{1}{q},\frac{m}{p(q+1)}-\frac{1}{p}+1\right) \notag \\
&=\frac{1}{q}\Gamma\left(\frac{1}{q}\right)
\frac{\Gamma(m/(p(q+1))-1/p+1)}{\Gamma(m/(p(q+1))-1/p+1+1/q)}\notag\\
& \sim \frac{1}{q}\Gamma\left(\frac{1}{q}\right)(p(q+1))^{1/q}m^{-1/q}. \label{eq:J_1}
\end{align}
The last equivalence is due to the fact that
\begin{equation*}
\frac{\Gamma(m+a)}{\Gamma(m+b)} \sim m^{a-b} \quad \mbox{as}\ m \to \infty,
\end{equation*}
which follows from Stirling's formula; see also \cite[Problem 2, p.45]{henriciapplied}.
Moreover, for $m$ large enough,
\begin{equation}
\label{eq:J_2}
L_2<(1-\ep^q)^{m/(p(q+1))-1/p}(1-\ep)=o(m^{-1/q}).
\end{equation}
Then from \eqref{eq:J_1} and \eqref{eq:J_2},
\begin{align}
\liminf_{m \to \infty} m^{1/q}L(m,\ep)
=\liminf_{m \to \infty}m^{1/q}(L_1-L_2)
 \geq \frac{1}{q}\Gamma\left(\frac{1}{q}\right)(p(q+1))^{1/q}. \label{eq:L}
\end{align}

Similarly, $R(m,\ep)$ is estimated as follows.
We obtain
\begin{align*}
R(m,\ep)<(1-\ep^q)^{-1/p}\int_0^\infty \left(1+\frac{y^q}{p(q+1)}\right)^{-m}\,dy.
\end{align*}
Letting $y^q=p(q+1)(1-z)/z$,  we see that for $m$ large enough, 
the integral of the right-hand side can
be rewritten as
\begin{align*}
\frac{1}{q}(p(q+1))^{1/q}B\left(m-\frac{1}{q},\frac{1}{q}\right)
\sim \frac{1}{q}\Gamma\left(\frac{1}{q}\right)(p(q+1))^{1/q}m^{-1/q}.
\end{align*}
Then
\begin{equation}
\label{eq:R}
\limsup_{m \to \infty} m^{1/q}R(m,\ep) \leq (1-\ep^q)^{-1/p}
\frac{1}{q}\Gamma\left(\frac{1}{q}\right)(p(q+1))^{1/q}.
\end{equation}

Applying Lemma \ref{lem:Ipqm}, \eqref{eq:L} and \eqref{eq:R} to \eqref{eq:Jineq}, we have
$$\frac{1}{q}\Gamma\left(\frac{1}{q}\right)(p(q+1))^{1/q}
\leq \lim_{m\to \infty}I_{p,q}(m) \leq 
(1-\ep^q)^{-1/p}\frac{1}{q}\Gamma\left(\frac{1}{q}\right)
(p(q+1))^{1/q}.$$
These inequalities hold for any $\ep \in (0,1)$. 
Note that both sides of the first inequality are independent of $\ep$. 
Therefore, as $\ep \to 0^+$, we conclude
$$\lim_{m \to \infty} I_{p,q}(m)=\frac{1}{q}\Gamma \left(\frac{1}{q}\right)(p(q+1))^{1/q},$$
and the proof is complete.
\end{proof}

We finish by observing that the asymptote to the function $I_{p,q}(m)$ as $m \to \infty$ is $\frac{1}{q}\Gamma\left(\frac{1}{q}\right)(p(q+1))^{1/q}$ for all $p, q \in(1,\infty)$. While the behaviour of the integral $I_{p,q}(m)$ is not fully understood yet for the values of $m$ in the interval $(1, \infty)$, one can establish some knowledge about how the function $I_{p,q}(m)$ is approaching its asymptote as $m \to \infty$.

Based on Theorem \ref{asyexp}, for sufficiently large $m$ we may regard $I_{p,q}(m)$ as
\[
\widetilde{I}_{p,q}(m) := \frac{1}{q} \Gamma \left(\frac{1}{q}\right) (p(q+1))^{1/q}
\left[ 1-\frac{(q+1)(pq^2+2pq-3q^2+p-2q)}{2q^2(2q+1)}\frac{1}{m}\right].
\] 
Now choose $p,\ q \in (1,\infty)$ so that $p<\frac{q(3q+2)}{(q+1)^2}$, then the function $\widetilde I_{p,q}(m)$ becomes convex and decreasing for sufficiently large $m$ as opposed to the classical case when $p=q=2$ (where the integral $\widetilde I_{2,2}(m)$ is concave and increasing when approaching its asymptote $\sqrt{\frac{3\pi}{2}}$ as $m \to \infty$).\\ 

We end this remark by leaving the following as open questions:
\begin{enumerate}[(a)]
\item Explicit computation of the coefficient $\gamma_2$ in the asymptotic expansion in Theorem \ref{asyexp} (cf. $c_2=-\sqrt{\frac{3\pi}{2}}\frac{13}{1120}$ in \eqref{BII}).
\item Behaviour of the integral $I_{p,q}(m)$ as a function of $m \in (1, \infty)$. In particular, the supremum of $I_{p,q}(m)$ on $[2,\infty)$ gives
a generalisation of Ball's integral inequality \eqref{eq:ball}.
\item Inspired by the explicit formula (which can be found in \cite[Exercise 22, p.471]{l1926introduction} attributed to Wolstenholme),
\[
\int_0^\infty \left(\frac{\sin x}{x}\right)^n dx=\frac{1}{(n-1)!}\ \frac{\pi}{2^n} \ \sum_{k=0}^{\lfloor n/2 \rfloor}(-1)^k {n \choose k} (n-2k)^{n-1}
\]
for integers $n \geq 1$, we refer to Remark \ref{remark1} wondering whether it is possible to obtain a similar expression for integrals of $\left(\sinc_{p,q}\right)^r$ when $r=n \in\mathbb{N}$.
\end{enumerate} 
\section{Acknowledgment}
The work of ST was supported by JSPS KAKENHI Grant Number 17K05336.
HM thanks Prof. D. E. Edmunds for his invaluable and generous comments, and endorses her former MSc student Mr. Andrew Pritchard for obtaining (independent of this work) an argument analogous to that of Corollary \ref{pqBall}. 

\begin{thebibliography}{10}

\bibitem{baksi2018basis}
{\sc O.~Baksi, P.~Gurka, J.~Lang, and O.~Mendez}, {\em Basis properties of the
  p-exponential functions of {L}indqvist and {P}eetre type}, Mathematical
  Inequalities \& Applications, 21 (2018), pp.~1003--1013.

\bibitem{ball1986cube}
{\sc K.~Ball}, {\em Cube slicing in $\mathbb{R}^n$}, Proceedings of the
  American Mathematical Society, 97 (1986), pp.~465--473.

\bibitem{bhayo2012generalized}
{\sc B.~A. Bhayo and M.~Vuorinen}, {\em On generalized trigonometric functions
  with two parameters}, Journal of Approximation Theory, 164 (2012),
  pp.~1415--1426.

\bibitem{BBCDG2006}
{\sc P.~Binding, L.~Boulton, J.~{\v C}epi{\v c}ka, P.~Dr{\'a}bek, and P.~Girg},
  {\em Basis properties of eigenfunctions of the $p$-{L}aplacian}, Proc. Amer.
  Math. Soc., 134 (2006), pp.~3487--3494.

\bibitem{borwein2001some}
{\sc D.~Borwein and J.~M. Borwein}, {\em Some remarkable properties of sinc and
  related integrals}, The Ramanujan Journal, 5 (2001), pp.~73--89.

\bibitem{borwein2010p}
{\sc D.~Borwein, J.~M. Borwein, and I.~E. Leonard}, {\em ${L}_p$ norms and the
  sinc function}, The American Mathematical Monthly, 117 (2010), pp.~528--539.

\bibitem{BL2010}
{\sc L.~Boulton and G.~J. Lord}, {\em Approximation properties of the $q$-sine
  bases}, Proc. R. Soc. A, 467 (2011), pp.~2690--2711.

\bibitem{BM2016}
{\sc L.~Boulton and H.~Melkonian}, {\em Generalised cosine functions, basis and
  regularity properties}, J. Math. Anal. Appl., 444 (2016), pp.~25--46.

\bibitem{edmunds2019behaviour}
{\sc D.~Edmunds and H.~Melkonian}, {\em Behaviour of ${L}_q$ norms of the
  $\sinc_p$ function}, Proceedings of the American Mathematical Society, 147
  (2019), pp.~229--238.

\bibitem{EdmundsGurkaLang2012}
{\sc D.~E. Edmunds, P.~Gurka, and J.~Lang}, {\em Properties of generalized
  trigonometric functions}, J. Approx. Theory, 164 (2012), pp.~47--56.

\bibitem{EdmundsLang2011}
{\sc D.~E. Edmunds and J.~Lang}, {\em Eigenvalues, Embeddings and Generalised
  Trigonometric Functions}, vol.~2016 of Lecture Notes in Mathematics,
  Springer-Verlag Berlin Heidelberg, 2011.

\bibitem{Grad2007}
{\sc I.~S. Gradshteyn and I.~M. Ryzhik}, {\em Table of Integrals, Series and
  Products}, Elsevier/Academic Press, Amsterdam, seventh~ed., 2007.

\bibitem{henriciapplied}
{\sc P.~Henrici}, {\em Applied and Computational Complex Analysis}, vol.~2,
  Wiley, 1977.

\bibitem{kerman2015asymptotically}
{\sc R.~Kerman, R.~Ol'hava, and S.~Spektor}, {\em An asymptotically sharp form
  of {B}all's integral inequality}, Proceedings of the American Mathematical
  Society, 143 (2015), pp.~3839--3846.

\bibitem{Takeuchi2019}
{\sc H.~Kobayashi and S.~Takeuchi}, {\em Applications of generalized
  trigonometric functions with two parameters}, Communications on Pure and
  Applied Analysis, 18 (2019), pp.~1509--1521.

\bibitem{l1926introduction}
{\sc T.~J. L'Anson-Bromwich}, {\em An Introduction to the Theory of Infinite
  Series}, Macmillan, 1926.

\bibitem{lindqvist1993note}
{\sc P.~Lindqvist}, {\em Note on a nonlinear eigenvalue problem}, Rocky
  Mountain Journal of Mathematics, 23 (1993), pp.~281--288.

\bibitem{marsden1999basic}
{\sc J.~E. Marsden and M.~J. Hoffman}, {\em Basic complex analysis}, New York:
  WH Freeman, 1999.

\bibitem{olver1974asymptotics}
{\sc F.~W.~J. Olver}, {\em Asymptotics and {S}pecial {F}unctions}, 2nd ed., AK
  Peters/CRC Press, Natick, MA, 1997.

\bibitem{1984Otani}
{\sc M.~{\^O}tani}, {\em A remark on certain nonlinear elliptic equations},
  Proc. Fac. Sci. Tokai Univ., 19 (1984), pp.~23--28.

\bibitem{paredes2003analytic}
{\sc L.~I. Paredes and K.~Uchiyama}, {\em Analytic singularities of solutions
  to certain nonlinear ordinary differential equations associated with $ p
  $-{L}aplacian}, Tokyo Journal of Mathematics, 26 (2003), pp.~229--240.

\bibitem{Takeuchi2014}
{\sc S.~Takeuchi}, {\em The basis property of generalized jacobian elliptic
  functions}, Communications on Pure and Applied Analysis, 13 (2014),
  pp.~2675--2692.

\bibitem{takeuchi2016multiple}
{\sc S.~Takeuchi}, {\em Multiple-angle formulas of generalized trigonometric
  functions with two parameters}, Journal of Mathematical Analysis and
  Applications, 444 (2016), pp.~1000--1014.

\end{thebibliography}
%
%
%

\end{document}